\let\oldmarginpar\marginpar
\renewcommand\marginpar[1]{\-\oldmarginpar[\raggedleft\footnotesize #1]%
  {\raggedright\footnotesize #1}}
\newcommand{\n}{\nabla}
\newcommand{\Proj}{\mathcal P}
\newcommand{\F}{\mathcal F}
\newcommand{\I}{\mathcal I}
\theoremstyle{plain} 
\newtheorem{theorem}{Theorem}[section]
\newtheorem{proposition}[theorem]{Proposition}
\theoremstyle{definition}
\newtheorem{lemma}[theorem]{Lemma}
\theoremstyle{remark}
\newtheorem{example}{Example}
\newcommand{\bp}{\begin{proof}\;}
  \newcommand{\ep}{\end{proof}}
\title{Finsler 2-manifolds whose holonomy group is the diffeomorphism
  group of the circle}
\author{Zolt\'an Muzsnay and P\'eter T. Nagy}
\begin{document}

\maketitle

\begin{abstract}
  In this paper we show that the topological closure of the holonomy
  group of a certain class of projectively flat Finsler $2$-manifolds of
  constant curvature is maximal, that is isomorphic to the connected
  component of the diffeomorphism group of the circle. This class of
  $2$-manifolds contains the standard Funk plane of constant negative
  curvature and the Bryant-Shen-spheres of constant positive curvature.
  The result provides the first examples describing completely infinite
  dimensional Finslerian holonomy structures.  
\end{abstract}

\footnotetext{2000 {\em Mathematics Subject Classification:} 53C29,
  53B40, 58D05, 22E65, 17B66}

\footnotetext{{\em Key words and phrases:} holonomy, Finsler geometry,
  groups of diffeomorphisms, infinite-dimensional Lie groups, Lie
  algebras of vector fields.}

\footnotetext{This research was supported by the Hungarian Scientific
  Research Fund (OTKA) Grant K 67617.}

\section{Introduction}
\label{intro}

The notion of the holonomy group of a Riemannian or Finslerian manifold
can be introduced in a very natural way: it is the group generated by
parallel translations along loops.  In contrast to the Finslerian case,
the Riemannian holonomy groups have been extensively studied.  One of
the earliest fundamental results is the theorem of Borel and
Lichnerowicz \cite{BL} from 1952, claiming that the holonomy group of a
simply connected Riemannian manifold is a closed Lie subgroup of the
orthogonal group $O(n)$. By now, the complete classification of
Riemannian holonomy groups is known.

Holonomy theory of Finsler spaces is, however, essentially different
from Riemannian theory, and it is far from being well understood. In
\cite{Mu_Na} we proved that the holonomy group of a Finsler manifold
of nonzero constant curvature with dimension greater than $2$ is not a
compact Lie group. In \cite{MuNa1} we showed that there exist large
families of projectively flat Finsler manifolds of constant curvature
such that their holonomy groups are not finite dimensional Lie groups.
The proofs in the above mentioned papers give estimates for the dimension 
of tangent Lie algebras of the holonomy group and therefore they do not 
give direct information about the infinite dimensional structure of the 
holonomy group.

Until now, perhaps because of technical difficulties, not a single
infinite dimensional Finsler holonomy group has been described.  In this
paper we provide the first such a description: we show that the
topological closure of the holonomy group of a certain class of simply
connected, projectively flat Finsler $2$-manifolds of constant curvature
is not a finite dimensional Lie group, and we prove that its topological
closure is $\mathsf{Diff}^{\infty}_+(\mathbb S^1)$, the connected
component of the full diffeomorphism group of the circle.  This class of
Finsler $2$-manifolds contains the positively complete standard Funk
plane of constant negative curvature (positively complete standard Funk
plane), and the complete irreversible Bryant-Shen-spheres of constant
positive curvature (\cite{Shen4}, \cite{Br2}).  We remark that for every
simply connected Finsler $2$-manifold the topological closure of the
holonomy group is a subgroup of $\mathsf{Diff}^{\infty}_+(\mathbb
S^1)$. That means that in the examples mentioned above, the closed
holonomy group is maximal.  In the proof we use our constructive method
developed in \cite{MuNa1} for the study of Lie algebras of vector fields
on the indicatrix, which are tangent to the holonomy group.  In the
proof we use the constructive method developed in \cite{MuNa1} to study
the Lie algebras of vector fields on the indicatrix which are tangent to
the holonomy group.

\section{Preliminaries}

Throughout this article, $M$ is a $C^\infty$ smooth manifold,
${\mathfrak X}^{\infty}(M)$ is the vector space of smooth vector fields
on $M$ and ${\mathsf{Diff}}^\infty(M)$ is the group of all
$C^\infty$-diffeomorphism of $M$.  The first and the second tangent
bundles of $M$ are denoted by $(TM,\pi ,M)$ and $(TTM,\tau ,TM)$,
respectively.
\\[1ex]
A \emph{Finsler manifold} is a pair $(M,\mathcal F)$, where the norm
function $\F\colon TM \to \mathbb{R}_+$ is continuous, smooth on $\hat T
M \!:= \!TM\!  \setminus\! \{0\}$, its restriction ${\mathcal
  F}_x={\mathcal F}|_{_{T_xM}}$ is a positively homogeneous function of
degree one and the symmetric bilinear form
\begin{displaymath}
  g_{x,y} \colon (u,v)\ \mapsto \ g_{ij}(x, y)u^iv^j=\frac{1}{2}
  \frac{\partial^2 \mathcal F^2_x(y+su+tv)}{\partial s\,\partial t}\Big|_{t=s=0}
\end{displaymath}
is positive definite at every $y\in \hat T_xM$.  \\[1ex]
\emph{Geodesics} of $(M, \mathcal F)$ are determined by a system of
$2$nd order ordinary differential equation $\ddot{x}^i + 2 G^i(x,\dot
x)=0$, $i = 1,...,n$ in a local coordinate system $(x^i,y^i)$ of $TM$,
where $G^i(x,y)$ are given by
\begin{equation}
  \label{eq:G_i}  G^i(x,y):= \frac{1}{4}g^{il}(x,y)\Big(2\frac{\partial
    g_{jl}}{\partial x^k}(x,y) -\frac{\partial g_{jk}}{\partial
    x^l}(x,y) \Big) y^jy^k.
\end{equation}
A vector field $X(t)=X^i(t)\frac{\partial}{\partial x^i}$ along a curve
$c(t)$ is said to be parallel with respect to the associated
\emph{homogeneous (nonlinear) connection} if it satisfies
\begin{equation}
  \label{eq:D}
  D_{\dot c} X (t):=\Big(\frac{d X^i(t)}{d t}+  G^i_j(c(t),X(t))\dot c^j(t)
  \Big)\frac{\partial}{\partial x^i}
  =0, 
\end{equation}
where $ G^i_j=\frac{\partial G^i}{\partial y^j}$.
\\
The \emph{horizontal Berwald covariant derivative} $\nabla_X\xi$ of
$\xi(x,y) = \xi^i(x,y)\frac {\partial}{\partial y^i}$ by the vector
field $X(x) = X^i(x)\frac {\partial}{\partial x^i}$ is expressed locally
by
\begin{equation}
  \label{covder}
  \nabla_X\xi = \left(\frac {\partial\xi^i(x,y)}{\partial x^j} 
    - G_j^k(x,y)\frac{\partial \xi^i(x,y)}{\partial y^k} + 
    G^i_{j k}(x,y)\xi^k(x,y)\right)X^j\frac {\partial}{\partial y^i}, 
\end{equation}
where we denote $G^i_{j k}(x,y) := \frac{\partial G_j^i(x,y)}{\partial
  y^k}$.
\\[1ex]
The \emph{Riemannian curvature tensor} field
\begin{math}
  R_{}\!= \! R^i_{jk}(x,y) dx^j\otimes dx^k \otimes
  \frac{\partial}{\partial x^i}
\end{math}
has the expression
\begin{displaymath}
  R^i_{jk}(x,y) =  \frac{\partial G^i_j(x,y)}{\partial x^k} 
  - \frac{\partial G^i_k(x,y)}{\partial x^j} + 
  G_j^m(x,y)G^i_{k m}(x,y) - G_k^m(x,y)G^i_{j m}(x,y). 
\end{displaymath} 
The manifold has \emph{constant flag curvature} $\lambda\in{\mathbb R}$, if for
any $x\in M$ the local expression of the Riemannian curvature is
\begin{displaymath}
  R^i_{jk}(x,y) = \lambda\big(\delta_k^ig_{jm}(x,y)y^m -
  \delta_j^ig_{km}(x,y)y^m\big).
\end{displaymath}
Assume that the Finsler manifold $(M,\mathcal F)$ is locally
projectively flat. Then for every point $x\in M$ there exists an
\emph{adapted} local coordinate system, that is a mapping $(x^1,\dots
,x^n)$ on a neighbourhood $U$ of $x$ into the Euclidean space $\mathbb
R^n$\!, such that the straight lines of $\mathbb R^n$ correspond to the
geodesics of $(M, \F)$. Then the \emph{geodesic coefficients} are of the
form
\begin{equation}
  \label{eq:proj_flat_G_i}
  G^i \!=\! \Proj y^i, \quad  
  G^i_k \!=\! \frac{\partial\Proj}{\partial y^k}y^i \!
  + \! \Proj\delta^i_k,\quad G^i_{kl} 
  \!=\! \frac{\partial^2\Proj}{\partial y^k\partial y^l}y^i 
  \!+\! \frac{\partial \Proj}{\partial y^k}\delta^i_l \!+\! 
  \frac{\partial \Proj}{\partial y^l}\delta^i_k
\end{equation}
where $\Proj(x,y)$ is a 1-homogeneous function in $y$, called the
\emph{projective factor} of $(M,\F)$. According to Lemma 8.2.1 in
\cite{ChSh} p.155, if $(M \! \subset\! \mathbb R^n, \F)$ is a
projectively flat manifold, then its projective factor can be computed
using the formula
\begin{equation}
  \label{eq:P}
  \Proj(x,y) = \frac{1}{2\F}\frac{\partial\F}{\partial x^i}y^i.
\end{equation}

\begin{example} 
  \label{funk} 
  (P. Funk, \cite{Fk1}, \cite{Fk2}, \cite{Fk3}) The {\bf standard Funk
    manifold} $(\mathbb D^n, \F)$ defined by the metric function
  \begin{equation}
    \label{projective1} \F(x,y) = \frac{\sqrt{|y|^2 - \left(|x|^2|y|^2 - 
          \langle x,y\rangle^2\right)}}{1 - |x|^2}\pm\frac{\langle x,y\rangle}
    {1 - |x|^2} 
  \end{equation}
  on the unit disk $\mathbb D^n \subset \mathbb R^n$ is projectively
  flat with constant flag curvature $\lambda=-\frac14$. Its projective
  factor can be computed using formula (\ref{eq:P}):
  \begin{equation}
    \label{projective2}
    \Proj(x,y) =  \frac12 \;\frac{\pm\sqrt{|y|^2 - \left(|x|^2|y|^2  
          - \langle x,y\rangle^2\right)} + \langle x,y\rangle}{1 - |x|^2}.
  \end{equation}
  We call the standard Funk $2$-manifold the \emph{standard
    Funk plane}.
\end{example}
\begin{example} 
  \label{BS} 
  The {\bf Bryant-Shen sphere}s $(\mathbb S^n,
  \F_\alpha)_{_{|\alpha|<\frac{\pi}{2}}}$, are the elements of a
  $1$-parameter family of projectively flat complete Finsler manifolds
  with constant flag curvature $\lambda=1$ defined on the $n$-sphere
  $\mathbb S^n$.  The metric function and the projective factor at $0\!\in\!
  \mathbb R^n$ have the form
  \begin{equation}\label{BrSh}
    \F(0,y) = |y| \,\cos \alpha, \quad 
    \Proj(0,y) = |y|\,\sin \alpha,\quad \text{with}\quad |\alpha|<\frac{\pi}{2}
  \end{equation}
  in a local coordinate system corresponding to the Euclidean canonical
  coordinates, centered at $0\in \mathbb R^n$. R.~Bryant in [Br1], [Br2]
  introduced and studied this class of Finsler metrics on $\mathbb S^2$
  where great circles are geodesics. Z. Shen generalized its
  construction to $\mathbb S^n$ and obtained the expression (\ref{BrSh})
  (cf. Example 7.1. in \cite{Shen4} and Example 8.2.9 in \cite{ChSh}).

\end{example}

\section{Holonomy group as subgroup of the diffeomorphism group}

The group $\mathsf{Diff}^{\infty}(K)$ of diffeomorphisms of a compact
manifold $K$ is an infinite dimensional Lie group belonging to the class
of Fr\'echet Lie groups. The Lie algebra of $\mathsf{Diff}^{\infty}(K)$
is the Lie algebra ${\mathfrak X}^{\infty}(K)$ of smooth vector fields
on $K$ endowed with the negative of the usual Lie bracket of vector
fields. $\mathsf{Diff}^{\infty}(K)$ is modeled on the locally convex
topological Fr\'echet vector space ${\mathfrak X}^{\infty}(K)$. A
sequence $\{f_j \}_{j\in\mathbb N}\subset{\mathfrak X}^{\infty}(K)$
converges to $f$ in the topology of ${\mathfrak X}^{\infty}(K)$ if and
only if the functions $f_j$ and all their derivatives converge uniformly
to $f$, respectively to the corresponding derivatives of $f$. We note
that the difficulty of the theory of Fr\'echet manifolds comes from the
fact that the inverse function theorem and the existence theorems for
differential equations, which are well known for Banach manifolds, are
not true in this category. These problems have led to the concept of
regular Fr\'echet Lie groups (cf. H. Omori \cite{Omori} Chapter III,
A. Kriegl -- P. W. Michor \cite{KrMi} Chapter VIII). The distinguishing
properties of regular Fr\'echet Lie groups can be summarized as the
existence of smooth exponential map from the Lie algebra of the
Fr\'echet Lie groups to the group itself, and the existence of product
integrals, which produces the convergence of some approximation methods
for solving differential equations (cf. Section III.5. in \cite{Omori},
pp. 83 --89). J. Teichmann gave a detailed discussion of these
properties in \cite{TE}. In particular $\mathsf{Diff}^{\infty}(K)$ is a
topological group which is an inverse limit of Lie groups modeled on
Banach spaces and hence it is a regular Fr\'echet Lie group (Corollary
5.4 in \cite{Omori}).
\\[1ex]
Let $H$ be a subgroup of the diffeomorphism group
$\mathsf{Diff}^{\infty}(K)$ of a differentiable manifold $K$. A vector
field $X\!\in\!{\mathfrak X}^{\infty}(K)$ is called \emph{tangent to}
$H\subset\mathsf{Diff}^{\infty}(K)$ if there exists a ${\mathcal
  C}^1$-differentiable $1$-parameter family $\{\Phi(t)\in
H\}_{t\in\mathbb R}$ of diffeomorphisms of $K$ such that
$\Phi(0)=\mathsf{Id}$ and \(\frac{d\Phi(t)}{d t}\big|_{t=0}=X.\) A Lie
subalgebra $\mathfrak h$ of ${\mathfrak X}^{\infty}(K)$ is called
\emph{tangent to} $H$, if all elements of $\mathfrak h$ are tangent
vector fields to $H$.\\[1ex]
We denote by $(\I M,\pi,M)$ the \emph {indicatrix bundle} of the Finsler
manifold $(M,\mathcal F)$, the \emph{indicatrix} $\I_xM$ at $x \in M$ is
the compact hypersurface $\I_xM:= \{y \in T_xM ; \ \mathcal F(y) = 1\}$
in $T_xM$ which is diffeomorphic to the sphere $\mathbb S^{n-1}$, if $\dim(M) =
n$. The homogeneous (nonlinear) parallel translation
$\tau_{c}:T_{c(0)}M\to T_{c(1)}M$ along a curve $c:[0,1]\to M$ preserves
the value of the Finsler function, hence it induces a map
$\tau_{c}\colon \I_{c(0)}M\longrightarrow \I_{c(1)}M$ between the
indicatrices.
\\[1ex]
The \emph{holonomy group} $\mathsf{Hol}_x(M)$ of the Finsler manifold
$(M,\F)$ at a point $x\in M$ is the subgroup of the group of
diffeomorphisms ${\mathsf{Diff}^{\infty}}({\I}_xM)$ generated by
(nonlinear) parallel translations of ${\I}_xM$ along piece-wise
differentiable closed curves initiated at the point $x\in M$.  The
\emph{closed holonomy group} is the topological closure
$\overline{\mathsf{Hol}_x(M)}$ of the holonomy group with respect of the
Fr\'echet topology of ${\mathsf{Diff}^{\infty}}({\I}_xM)$.
\\
We remark that the diffeomorphism group
${\mathsf{Diff}^{\infty}}({\I}_xM)$ of the indicatrix ${\I}_xM$ is a
regular infinite dimensional Lie group modeled on the vector space
${\mathfrak X}^{\infty}({\I}_xM)$. Particularly ${\mathsf
  {Diff}}^\infty(I_xM)$ is a strong inverse limit Banach (ILB) Lie
group. In this category of groups the exponential map can be defined,
and the group structure is locally determined by the Lie algebra
${\mathfrak X}^{\infty}({\I}_xM)$ of the Lie group
${\mathsf{Diff}^{\infty}}({\I}_xM)$ (cf.~~\cite{KrMi}, \cite{Omori}).
\\[1ex]
For any vector fields $X, Y\in {\mathfrak X}^{\infty}(M)$ on $M$ the
vector field $\xi = R(X,Y)\in {\mathfrak X}^{\infty}({\I}M)$ is called a
\emph{curvature vector field} of $(M, \F)$ (see \cite{Mu_Na}). The Lie
algebra $\mathfrak{R}(M)$ of vector fields generated by the curvature
vector fields of $(M, \F)$ is called the \emph{curvature algebra} of
$(M, \F)$.  The restriction $\mathfrak{R}_x(M) \! := \!
\big\{\,\xi\big|_{\I_xM}\ ;\ \xi\in\mathfrak{R}(M)\, \big\} \subset
{\mathfrak X}^{\infty}({\I}_xM)$ of the curvature algebra to an
indicatrix $\I_x M$ is called the \emph{curvature algebra at the point
  $x\in M$}.
\\[1ex]
The \emph{infinitesimal holonomy algebra} of $(M,\mathcal F)$ is the
smallest Lie algebra $\mathfrak{hol}^{*}(M)$ of vector fields on the
indicatrix bundle $\I M$ satisfying the following properties
\begin{enumerate}
\item[a)] any curvature vector field $\xi$ belongs to
  $\mathfrak{hol}^{*}(M)$,
\item[b)] if $\xi, \eta\in \mathfrak{hol}^{*}(M)$ then
  $[\xi,\eta]\in\mathfrak{hol}^{*}(M)$,
\item[c)] if $\xi\in\mathfrak{hol}^{*}(M)$ and $X\in {\mathfrak
    X}^{\infty}(M)$ then the horizontal Berwald covariant derivative
  $\nabla_{\!\! X}\xi$ also belongs to $\mathfrak{hol}^{*}(M)$.
\end{enumerate} 
The restriction
\begin{math}
  \mathfrak{hol}^{*}_x(M) \! := \!  \big\{\,\xi\big|_{\I_xM}\ ;\
  \xi\in\mathfrak{hol}^{*}(M)\, \big\} \subset {\mathfrak
    X}^{\infty}({\I}_xM)
\end{math}
of the infinitesimal holonomy algebra to an indicatrix $\I_x M$ is
called the \emph{infinitesimal holonomy algebra at the point $x\in
  M$}. Clearly, $\mathfrak{R}(M)\subset\mathfrak{hol}^{*}(M)$ and
$\mathfrak{R}_x(M)\subset\mathfrak{hol}^{*}_x(M)$ for any $x\in M$ (see
\cite{MuNa}).
\\[1ex]
Roughly speaking, the image of the curvature tensor (the curvature
vector fields) determines the curvature algebra, which generates (with
the bracket operation and the covariant derivation) the infinitesimal
holonomy algebra. Localising these object at a point $x\in M$ we obtain the
curvature algebra and the infinitesimal holonomy algebra at $x$.
\\[1ex]
The following assertion will be an important tool in the next
discussion:
\\[1ex]
\emph{The infinitesimal holonomy algebra $\mathfrak{hol}^{*}_x(M)$ at
  any point $x\in M$ is tangent to the holonomy group
  $\mathsf{Hol}_x(M)$.}  (Theorem 6.3 in \cite{MuNa}).
\\[1ex]
The topological closure of the holonomy group is an interesting
geometrical object which can reflect the geometric properties of the
Finsler manifold. In the characterization of the closed holonomy group
we use the following
\begin{proposition}  
  \label{expo}
  The group $\big\langle\!\exp(\mathfrak{hol}^{*}_x(M))\!\big\rangle$
  generated by the image $\exp(\mathfrak{hol}^{*}_x(M))$ of the
  infinitesimal holonomy algebra $\mathfrak{hol}^{*}_x(M)$ at a point
  $x\in M$ with respect to the exponential map $\exp :{\mathfrak
    X}^{\infty}({\I}_xM)\to{\mathsf{Diff}^{\infty}}({\I}_xM)$ is a
  subgroup of the closed holonomy group $\overline{\mathsf{Hol}_x(M)}$.
\end{proposition}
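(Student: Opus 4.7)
The plan is to leverage the tangency assertion recalled just before the proposition (Theorem 6.3 in \cite{MuNa}), which says that every vector field $\xi\in\mathfrak{hol}^*_x(M)$ is realised as the velocity at $t=0$ of some $\mathcal C^1$-family $\{\Phi(t)\}_{t\in\mathbb R}\subset\mathsf{Hol}_x(M)$ with $\Phi(0)=\mathsf{Id}$, and then to exploit the regular Fr\'echet-Lie structure on $\mathsf{Diff}^\infty(\I_xM)$ to pass from such tangent families to the honest exponential $\exp(\xi)$.

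Concretely, I would fix $\xi\in\mathfrak{hol}^*_x(M)$ and pick the associated family $\Phi(t)\in\mathsf{Hol}_x(M)$ with $\tfrac{d}{dt}\Phi(t)\big|_{t=0}=\xi$. Because $\mathsf{Diff}^\infty(\I_xM)$ is a regular Fr\'echet (indeed strong ILB) Lie group in the sense of Omori, the exponential map is given by a product-integral formula: for any $\mathcal C^1$-curve through the identity with initial velocity $\xi$, one has the convergence
\begin{equation*}
  \exp(\xi)\;=\;\lim_{n\to\infty}\Phi\!\left(\tfrac{1}{n}\right)^{\!n}
\end{equation*}
in the Fr\'echet topology of $\mathsf{Diff}^\infty(\I_xM)$. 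Since $\mathsf{Hol}_x(M)$ is a subgroup, each power $\Phi(1/n)^n$ already lies in $\mathsf{Hol}_x(M)$, so the limit lies in its topological closure $\overline{\mathsf{Hol}_x(M)}$. This gives $\exp(\mathfrak{hol}^*_x(M))\subset\overline{\mathsf{Hol}_x(M)}$.

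To promote this to the whole generated subgroup, I would invoke the elementary fact that the topological closure of a subgroup of a topological group is itself a subgroup: multiplication and inversion are continuous in $\mathsf{Diff}^\infty(\I_xM)$, so $\overline{\mathsf{Hol}_x(M)}$ is stable under composition and inverses. Therefore $\overline{\mathsf{Hol}_x(M)}$ contains every finite product of elements of the form $\exp(\xi)$ and $\exp(\xi)^{-1}=\exp(-\xi)$, which is exactly the subgroup $\langle\exp(\mathfrak{hol}^*_x(M))\rangle$.

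The delicate point is the second step: justifying the product-integral formula for $\exp$ with only a $\mathcal C^1$-family $\Phi(t)$, rather than a genuine one-parameter subgroup. This is precisely the content of the regularity of $\mathsf{Diff}^\infty(\I_xM)$ as a Fr\'echet Lie group (in Omori's strong ILB setting the exponential map exists and is continuous, and product integrals of smooth curves of Lie algebra elements converge to solutions of the corresponding right-invariant ODE on the group). I would cite Omori \cite{Omori} (Chapter III, Section III.5, together with Corollary~5.4 for the applicability to $\mathsf{Diff}^\infty$ of a compact manifold) and Kriegl--Michor \cite{KrMi} (Chapter VIII) for this convergence result; the remaining algebra is then purely formal.
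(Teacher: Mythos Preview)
Your proposal is correct and follows essentially the same route as the paper: both invoke the tangency result (Theorem~6.3 of \cite{MuNa}) to produce a $\mathcal C^1$-family $\Phi(t)\in\mathsf{Hol}_x(M)$ with initial velocity $\xi$, then appeal to Omori's product-integral argument (Corollary~5.4 of \cite{Omori}) to show $\Phi(t/n)^n\to\exp(t\xi)$ in the Fr\'echet topology, and conclude that $\exp(\mathfrak{hol}^*_x(M))$, hence the group it generates, lies in $\overline{\mathsf{Hol}_x(M)}$. Your explicit remark that the closure of a subgroup of a topological group is again a subgroup is a welcome clarification the paper leaves implicit.
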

\begin{proof} 
  For any element $X\in \mathfrak{hol}^{*}_x(M)$ there exists a
  ${\mathcal C}^1$-differentiable $1$-parameter family $\{\Phi(t)\in
  \mathsf{Hol}_x(M) \}_{t\in\mathbb R}$ of diffeomorphisms of the
  indicatrix $\I_xM$ such that $\Phi(0)=\mathsf{Id}$ and $\frac{d\Phi}{d
    t}\big|_{t=0}=X$.  Then, considering $\Phi(t)$ as "hair" and using
  the argument of Corollary 5.4. in \cite{Omori}, p. 85, we get that
  \begin{math}
    {\Phi^{n}\big(\frac{t}{n}\big)}\!\!=\Phi\big(\frac{t}{n}\big) \circ
    \cdots \circ \Phi \big(\frac{t}{n}\big)
  \end{math}
  in $\mathsf{Hol}_x(M)$ as a sequence of
  $\mathsf{Diff}^{\infty}({\I}_xM)$ converges uniformly in all
  derivatives to $\exp(t X)$.  It follows that we have
  \begin{displaymath}
    \{\exp(tX); t\in\mathbb R\}\subset \overline{\mathsf{Hol}_x(M)}
  \end{displaymath}
  for any $X\!\in\!\mathfrak{hol}^{*}_x(M)$ and therefore 
  \begin{math}
    \exp(\mathfrak{hol}^{*}_x(M))\! \subset \overline{\mathsf{Hol}_x
      (M)}.
  \end{math}
  Naturally, if we consider the generated group, then the relation is
  preserved, that is
  \begin{math}
    \big\langle\!\exp(\mathfrak{hol}^{*}_x(M))\!\big\rangle
    \subset  \overline{\mathsf{Hol}_x (M)},
  \end{math}
  which proves the proposition.
\end{proof}

\section{The group $\mathsf{Diff}_+^{\infty}({\mathbb S}^1)$ and the Fourier
  algebra}

Let $(M, \F)$ be a Finsler $2$-manifold. In this case the indicatrix is
diffeomorphic to $\mathbb S^1$ at any point $x\in M$. If there exists a
non-vanishing curvature vector field at $x\in M$ then any other
curvature vector field at $x\in M$ is proportional to it, which means
that the curvature algebra is at most 1-dimensional.  However, the
infinitesimal holonomy algebra can be an infinite dimensional subalgebra
of $\mathfrak X^{\infty}(\mathbb S^1)$, therefore the holonomy group can
be an infinite dimensional subgroup of $\mathsf{Diff}_+^{\infty}(\mathbb
S^1)$, cf. \cite{MuNa1}.
\\[1ex]
Let $\mathbb S^1 = \mathbb R \mod 2\pi$ be the unit circle with the
standard counterclockwise orientation. The group
$\mathsf{Diff}_+^{\infty}(\mathbb S^1)$ of orientation preserving
diffeomorphisms of the $\mathbb S^1$ is the connected component of
$\mathsf{Diff}^{\infty}(\mathbb S^1)$.  The Lie algebra of
$\mathsf{Diff}_+^{\infty}(\mathbb S^1)$ is the Lie algebra $\!{\mathfrak
  X}^{\infty}(\mathbb S^1)$ -- denoted also by $\mathsf{Vect}(\mathbb
S^1)$ in the literature -- can be written in the form
$f(t)\frac{d}{dt}$, where $f$ is a $2\pi$-periodic smooth functions on
the real line $\mathbb R$. A sequence $\{f_j\frac{d}{dt}\}_{j\in\mathbb
  N}\subset\mathsf{Vect}\, (\mathbb S^1)$ converges to $f\frac{d}{dt}$
in the Fr\'echet topology of $\mathsf{Vect}(\mathbb S^1)$ if and only if
the functions $f_j$ and all their derivatives converge uniformly to $f$,
respectively to the corresponding derivatives of $f$. The Lie bracket on
$\mathsf{Vect}(\mathbb S^1)$ is given by
\begin{displaymath}
  \Big[f\frac{d}{dt},g\frac{d}{dt}\Big] 
  = \Big(g\frac{df}{dt} - \frac{dg}{dt}f\Big)\frac{d}{dt}.
\end{displaymath}
The \emph{Fourier algebra} $\mathsf{F}(\mathbb S^1)$ on $\mathbb S^1$ is the Lie
subalgebra of $\mathsf{Vect}(\mathbb S^1)$ consisting of vector fields
$f\frac{d}{dt}$ such that $f(t)$ has finite Fourier series, i.e. $f(t)$
is a Fourier polynomial. The vector fields $\big\{\frac{d}{dt},\, \cos
nt\frac{d}{dt},\, \sin nt\frac{d}{dt}\big\}_{n\in \mathbb N}$ \ provide
a basis for $\mathsf{F}(\mathbb S^1)$. A direct computation shows that the
vector fields
\begin{equation}
  \label{gensys} 
  \frac{d}{dt},\quad \cos t\frac{d}{dt},\quad 
  \sin t\frac{d}{dt}, \quad \cos 2t\frac{d}{dt},
  \quad \sin 2t\frac{d}{dt}
\end{equation} 
generate the Lie algebra $\mathsf{F}(\mathbb S^1)$.  The complexification
$\mathsf{F}(\mathbb S^1)\!\otimes_{\mathbb R} \! \mathbb C$ of
$\mathsf{F}(\mathbb S^1)$ is called the \emph{Witt algebra} $\mathsf{W}(\mathbb S^1)$
on $\mathbb S^1$ having the natural basis
$\big\{ie^{int}\frac{d}{dt}\big\}_{n\in \mathbb Z}$, with the Lie
bracket
\begin{math}
  [ie^{imt}\frac{d}{dt}, ie^{int}\frac{d}{dt}] = ie^{i(n-m)t}
  \frac{d}{dt}.
\end{math}
\begin{lemma} 
  \label{expoFour} 
  The group $\big\langle \, \overline{\exp(\mathsf{F}(\mathbb S^1))}
  \,\big\rangle$ generated by the topological closure of the exponential
  image of the Fourier algebra $\mathsf{F}(\mathbb S^1)$ is the orientation
  preserving diffeomorphism group $\mathsf{Diff}_+^{\infty}(\mathbb S^1)$.
\end{lemma}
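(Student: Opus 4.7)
The plan is to reduce from the Fourier algebra to the full Lie algebra $\mathsf{Vect}(\mathbb{S}^1)$ by Fourier density, recognise the resulting subgroup as normal, and conclude via the abstract simplicity of $\mathsf{Diff}_+^{\infty}(\mathbb{S}^1)$.

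First I would show that $\mathsf{F}(\mathbb{S}^1)$ is Fr\'echet-dense in $\mathsf{Vect}(\mathbb{S}^1)$: for any $f\in C^\infty(\mathbb{S}^1)$ the Fourier coefficients $\hat f(n)$ decay faster than any polynomial in $|n|$, so the trigonometric partial sums $S_N f$ converge to $f$ in every $C^k$-norm, and therefore $S_N f\,\frac{d}{dt}\in\mathsf{F}(\mathbb{S}^1)$ approximates $f\,\frac{d}{dt}$ in the Fr\'echet topology. Since $\mathsf{Diff}_+^{\infty}(\mathbb{S}^1)$ is a regular Fr\'echet Lie group, the exponential map $\exp\colon\mathsf{Vect}(\mathbb{S}^1)\to\mathsf{Diff}_+^{\infty}(\mathbb{S}^1)$ is smooth, hence continuous, and this density yields $\exp(\mathsf{Vect}(\mathbb{S}^1))\subset\overline{\exp(\mathsf{F}(\mathbb{S}^1))}$; in particular $\overline{\exp(\mathsf{F}(\mathbb{S}^1))}=\overline{\exp(\mathsf{Vect}(\mathbb{S}^1))}$.

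Next, the identity $h\circ\exp(X)\circ h^{-1}=\exp(h_\ast X)$ shows that $\exp(\mathsf{Vect}(\mathbb{S}^1))$ is invariant under conjugation by any $h\in\mathsf{Diff}_+^{\infty}(\mathbb{S}^1)$; this property is preserved by taking closures, so the subgroup $H:=\big\langle\overline{\exp(\mathsf{F}(\mathbb{S}^1))}\big\rangle$ is a normal subgroup of $\mathsf{Diff}_+^{\infty}(\mathbb{S}^1)$. It is nontrivial since it contains every rotation $\exp\!\bigl(s\,\tfrac{d}{dt}\bigr)$ with $s\notin 2\pi\mathbb{Z}$. The abstract group $\mathsf{Diff}_+^{\infty}(\mathbb{S}^1)$ is simple by the theorem of Mather and Thurston, so any nontrivial normal subgroup must be the whole group, giving $H=\mathsf{Diff}_+^{\infty}(\mathbb{S}^1)$.

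The hard part is really this last step. Unlike in the Banach or finite-dimensional situation, $\exp$ on $\mathsf{Diff}_+^{\infty}(\mathbb{S}^1)$ is neither surjective nor a local diffeomorphism---its image is the thin class of diffeomorphisms embeddable in a smooth $1$-parameter flow---so one cannot realise an arbitrary diffeomorphism as a finite product of exponentials by purely soft Lie-theoretic arguments. The product-integral / regularity machinery of Omori only delivers that every element lies in the Fr\'echet closure of $\big\langle\exp(\mathsf{Vect}(\mathbb{S}^1))\big\rangle$; upgrading this density to the exact group-theoretic equality $H=\mathsf{Diff}_+^{\infty}(\mathbb{S}^1)$ appears to require the external simplicity input.
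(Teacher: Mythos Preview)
Your proof is correct and follows essentially the same route as the paper: Fr\'echet density of $\mathsf{F}(\mathbb S^1)$ in $\mathsf{Vect}(\mathbb S^1)$, continuity of $\exp$, conjugation-invariance yielding normality, and then the simplicity of $\mathsf{Diff}_+^{\infty}(\mathbb S^1)$. The only cosmetic differences are that the paper uses Fej\'er's theorem for density (you use rapid decay of Fourier coefficients) and attributes simplicity to Herman rather than Mather--Thurston, and it runs the normality/simplicity argument on $\big\langle\exp(\mathsf{Vect}(\mathbb S^1))\big\rangle$ itself before invoking the chain of inclusions, whereas you pass to the closure first---both orderings work.
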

\begin{proof} 
  The Fourier algebra $\mathsf{F}(\mathbb S^1)$ is a dense subalgebra of
  $\mathsf{Vect}(\mathbb S^1)$ with respect to the Fr\'echet topology,
  i.e. $\overline{\mathsf{F}(\mathbb S^1)}=\mathsf{Vect}(\mathbb S^1)$.  This
  assertion follows from the fact that the Fourier series of the
  derivatives of smooth functions are the derivatives of their Fourier
  series (c.f. \cite{SzN}, p. 386,) and from Fej\'er's approximation
  theorem, (c.f. \cite{SzN}, 429,) claiming that any continuous function
  can be approximated uniformly by the arithmetical means of the partial
  sums of its Fourier series. The exponential mapping is
  continuous (c.f.~Lemma 4.1 in \cite{Omori}, p.~79), hence we have
  \begin{equation}
    \label{eq:F_in_Dif_1}
    \exp(\mathsf{Vect}(\mathbb S^1))
    =\exp\big(\overline{\mathsf{F}(\mathbb S^1)}\big) \subset
    \overline{\exp(\mathsf{F}(\mathbb S^1))} 
    \subset \mathsf{Diff}_+^{\infty}(\mathbb S^1)
  \end{equation}
  which gives for the generated groups the relations
  \begin{equation}
    \label{eq:F_in_Dif_2}
    \big\langle\!    \exp(\mathsf{Vect}(\mathbb S^1))
    \!\big\rangle \subset
    \big\langle \, \overline{\exp(\mathsf{F}(\mathbb S^1))} \,\big\rangle
    \subset \mathsf{Diff}_+^{\infty}(\mathbb S^1).
  \end{equation}
  Moreover, the conjugation map 
  \begin{math}
    \mathsf{Ad}:\mathsf{Diff}_+^{\infty}(\mathbb
    S^1)\times\mathsf{Vect}(\mathbb S^1)
  \end{math}
  satisfies the relation $h\exp s\xi \,h^{-1} =\exp s\mathsf{Ad}(h)\xi$
  for every $h\in\mathsf{Diff}_+^{\infty}(\mathbb S^1)$ and
  $\xi\in\mathsf{Vect}(\mathbb S^1)$ (cf. Definition 3.1 in
  \cite{Omori}, p. 9).  Clearly, the Lie algebra $\mathsf{Vect}(\mathbb
  S^1)$ is invariant under conjugation and hence the group
  \begin{math}
    \big\langle\!  \exp(\mathsf{Vect}(\mathbb S^1)) \!\big\rangle
  \end{math}
  is also invariant under conjugation.  Therefore
  \begin{math}
    \big\langle\!  \exp(\mathsf{Vect}(\mathbb S^1)) \!\big\rangle
  \end{math}
  is a non-trivial normal subgroup of $\mathsf{Diff}_+^{\infty}(\mathbb
  S^1)$.  On the other hand $\mathsf{Diff}_+^{\infty}(\mathbb S^1)$ is a
  simple group (cf. \cite{Herman}) which means that its only non-trivial
  normal subgroup is itself. Therefore we have
  \\
  \begin{math}
    \big\langle\!  \exp(\mathsf{Vect}(\mathbb S^1)) \!\big\rangle
    =\mathsf{Diff}_+^{\infty}(\mathbb S^1),
  \end{math}
  and using (\ref{eq:F_in_Dif_2}) we get 
  \begin{math}
    \big\langle \, \overline{\exp(\mathsf{F}(\mathbb S^1))} \,\big\rangle =
    \mathsf{Diff}_+^{\infty}(\mathbb S^1).
  \end{math}
\end{proof} 

\section{Holonomy of the standard Funk plane and the Bryant-Shen
  $2$-spheres}

Using the results of the preceding chapter we can prove the following
statement, which provides a useful tool for the investigation of the
closed holonomy group of Finsler $2$-manifolds.
\begin{proposition} 
  \label{ifcontains} 
  If the infinitesimal holonomy algebra $\mathfrak{hol}^{*}_x(M)$ at a
  point $x\in M$ of a simply connected Finsler $2$-manifold $(M, \F)$
  contains the Fourier algebra $\mathsf{F}(\mathbb S^1)$ on the indicatrix at
  $x$, then $\overline{\mathsf{Hol}_x(M)}$ is isomorphic to
  $\mathsf{Diff}_+^{\infty}(\mathbb S^1)$.
\end{proposition}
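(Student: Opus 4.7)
The plan is to establish the isomorphism by verifying the two inclusions $\overline{\mathsf{Hol}_x(M)} \supset \mathsf{Diff}_+^{\infty}(\mathbb S^1)$ and $\overline{\mathsf{Hol}_x(M)} \subset \mathsf{Diff}_+^{\infty}(\mathbb S^1)$ separately. Both are essentially bookkeeping on top of the machinery already assembled: Proposition~\ref{expo} transfers exponentials of the infinitesimal holonomy algebra into the closed holonomy group, Lemma~\ref{expoFour} identifies $\big\langle\,\overline{\exp(\mathsf{F}(\mathbb S^1))}\,\big\rangle$ with $\mathsf{Diff}_+^{\infty}(\mathbb S^1)$, and the remark recorded in the introduction constrains the holonomy group to orientation-preserving diffeomorphisms.

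For the $\supset$ inclusion, I would chase the hypothesis through these tools. The assumption $\mathsf{F}(\mathbb S^1) \subset \mathfrak{hol}^{*}_x(M)$ yields $\exp(\mathsf{F}(\mathbb S^1)) \subset \exp(\mathfrak{hol}^{*}_x(M))$, and Proposition~\ref{expo} places the right-hand side (indeed the group it generates) inside $\overline{\mathsf{Hol}_x(M)}$. Since $\overline{\mathsf{Hol}_x(M)}$ is closed in $\mathsf{Diff}^{\infty}(\I_xM)$, passing to the topological closure on the left preserves the inclusion, and then taking the generated subgroup does as well. This gives
\[
\big\langle\,\overline{\exp(\mathsf{F}(\mathbb S^1))}\,\big\rangle \subset \overline{\mathsf{Hol}_x(M)},
\]
and Lemma~\ref{expoFour} identifies the left-hand side as $\mathsf{Diff}_+^{\infty}(\mathbb S^1)$.

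For the $\subset$ direction, I would appeal to the observation already recorded in the introduction: on a simply connected Finsler $2$-manifold every loop is contractible, so the corresponding parallel translation is a diffeomorphism of $\I_xM\cong\mathbb S^1$ isotopic to the identity and hence orientation-preserving. Thus $\mathsf{Hol}_x(M) \subset \mathsf{Diff}_+^{\infty}(\mathbb S^1)$, and because $\mathsf{Diff}_+^{\infty}(\mathbb S^1)$ is the closed identity component of $\mathsf{Diff}^{\infty}(\mathbb S^1)$, taking topological closures preserves this containment. Combining the two inclusions yields the claimed isomorphism. There is no genuine obstacle here: all the analytic content (Herman's simplicity theorem, Fej\'er's approximation theorem, and Omori's product-integral argument) has already been carried out in the preceding sections. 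The only point requiring a moment of care is to check that the generation and closure operations cooperate correctly along the chain of inclusions, which is automatic in a Fr\'echet topological group.
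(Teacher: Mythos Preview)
Your proposal is correct and follows essentially the same approach as the paper: both arguments establish the two inclusions separately, using Proposition~\ref{expo} and Lemma~\ref{expoFour} for the $\supset$ direction and simple connectedness for the $\subset$ direction. You supply a bit more justification for the latter inclusion (contractibility of loops forcing orientation preservation), whereas the paper states it in one line, but the structure is identical.
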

\begin{proof} 
  Since $M$ is simply connected we have
  \begin{equation}
    \label{eq:Hol_in_Diff}
    \overline{\mathsf{Hol}_x(M)}\subset\mathsf{Diff}_+^{\infty}(\mathbb S^1).
  \end{equation}
  On the other hand, using Proposition \ref{expo}, we get
  \begin{displaymath}
    \exp(\mathsf{F} (\mathbb S^1))\!\subset\! \overline{\mathsf{Hol}_x(M)}
    \ \Rightarrow\  
    \overline{\exp(\mathsf{F} (\mathbb S^1))}\!\subset\! \overline{\mathsf{Hol}_x(M)}
    \ \Rightarrow\
    \big\langle \, \overline{\exp(\mathsf{F}(\mathbb S^1))} \,\big\rangle
    \!\subset\! \overline{\mathsf{Hol}_x(M)},
  \end{displaymath}
  and from the last relation, using Lemma \ref{expoFour}, we can obtain
  that
  \begin{equation}
    \label{eq:Diff_in_Hol}
    \mathsf{Diff}_+^{\infty}(\mathbb S^1) \subset \overline{\mathsf{Hol}_x(M)}.
  \end{equation}
  Comparing (\ref{eq:Hol_in_Diff}) and (\ref{eq:Diff_in_Hol}) we get the
  assertion.
\end{proof}

Using this proposition we can prove our main result:
\begin{theorem}
  \label{prop_Hol_Dif}
  Let $(M, \mathcal F)$ be a simply connected projectively flat Finsler
  manifold of constant curvature $\lambda\neq 0$. Assume that there
  exists a point $x_0\in M$ such that the following conditions hold
  \vspace{-3pt}
  \begin{enumerate}
  \item[\emph{A)}] the induced Minkowski norm $\mathcal F(x_0,y)$ on
    $T_{x_0}M$ is an Euclidean norm $\lVert y\rVert$,
  \item[\emph{B)}] the projective factor $\mathcal P({x_0},y)$ on
    $T_{x_0}M$ satisfies $\mathcal P({x_0},y)=c\!\cdot\!\lVert y\rVert$
    with $0\neq c\in\mathbb R$. 
  \end{enumerate}
  Then the closed holonomy group $\overline{\mathsf{Hol}_{x_0}(M)}$ at
  $x_0$ is isomorphic to $\mathsf{Diff}^{\infty}_+(\mathbb S^1)$.
\end{theorem}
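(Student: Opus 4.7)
The plan is to apply Proposition \ref{ifcontains}: it suffices to show that the infinitesimal holonomy algebra $\mathfrak{hol}^{*}_{x_0}(M)$ contains the Fourier algebra $\mathsf{F}(\mathbb S^1)$. Since $\mathsf{F}(\mathbb S^1)$ is generated by the five vector fields listed in (\ref{gensys}), I only need to produce $\frac{d}{dt}$, $\cos t\frac{d}{dt}$, $\sin t\frac{d}{dt}$, $\cos 2t\frac{d}{dt}$, $\sin 2t\frac{d}{dt}$ inside $\mathfrak{hol}^{*}_{x_0}(M)$, working on the indicatrix $\I_{x_0}M$. Assumption A guarantees that $\I_{x_0}M$ is the round unit circle in $T_{x_0}M$, so I can identify it with $\mathbb S^1$ via $y=(\cos t,\sin t)$.

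The first step is to extract $\frac{d}{dt}$ from the curvature. Since $g_{ij}(x_0,y)=\delta_{ij}$, the constant curvature formula gives $R^i_{jk}(x_0,y)=\lambda(\delta^i_k y^j-\delta^i_j y^k)$. Taking $X=\partial_1$, $Y=\partial_2$ one gets the curvature vector field $\xi_0=\lambda(-y^2\partial_{y^1}+y^1\partial_{y^2})$, which restricts to $\lambda\frac{d}{dt}$ on the indicatrix. Because $\lambda\neq 0$, this yields $\frac{d}{dt}\in\mathfrak{hol}^{*}_{x_0}(M)$.

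The second step is to use horizontal Berwald covariant differentiation to generate the remaining trigonometric vector fields. Projective flatness gives the explicit shape (\ref{eq:proj_flat_G_i}) of the connection in terms of $\Proj$ and its $y$-derivatives, and assumption B $\Proj(x_0,y)=c\lVert y\rVert$ supplies the values of $G^i_j$ and $G^i_{jk}$ at $x_0$. For the $x$-derivatives of $\xi^i=R^i_{12}$ at $x_0$, I exploit the classical identity $\tfrac{\partial\Proj}{\partial x^i}y^i=\Proj^2-\lambda\F^2$ valid for projectively flat metrics of constant flag curvature, which together with conditions A--B determines $\tfrac{\partial \Proj}{\partial x^i}(x_0,y)$ and, by further differentiation, the mixed $x$-$y$ derivatives needed in (\ref{covder}). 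Computing $\nabla_{\partial_1}\xi_0$ and $\nabla_{\partial_2}\xi_0$ and restricting to the unit circle yields vector fields whose coefficient functions are trigonometric polynomials of degree at most two in $t$; iterating the procedure, and using brackets with the already-obtained $\frac{d}{dt}$ (which acts by $t$-differentiation on coefficients), one isolates the generators $\cos t\frac{d}{dt}$, $\sin t\frac{d}{dt}$, $\cos 2t\frac{d}{dt}$, $\sin 2t\frac{d}{dt}$.

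The main obstacle is the last step: showing that the explicit covariant derivatives do produce second-harmonic coefficients and not merely first-harmonic ones, so that one really escapes the $3$-dimensional Lie subalgebra $\mathrm{span}\{\frac{d}{dt},\cos t\frac{d}{dt},\sin t\frac{d}{dt}\}\cong\mathfrak{sl}_2(\mathbb R)$. This is where the constructive method of \cite{MuNa1} enters: the term $\tfrac{\partial^2\Proj}{\partial y^k\partial y^l}y^i$ in $G^i_{kl}$, evaluated at $c\lVert y\rVert$, contributes a quadratic factor in $y$ that, when paired with the linear factors already present in $\xi_0$, produces the required degree-two harmonics; a careful bookkeeping of the coefficients (using $c\neq 0$ in a nontrivial way) shows these harmonics are linearly independent from $\frac{d}{dt},\cos t\frac{d}{dt},\sin t\frac{d}{dt}$. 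Once this is established, Proposition \ref{ifcontains} delivers $\overline{\mathsf{Hol}_{x_0}(M)}\cong\mathsf{Diff}^{\infty}_+(\mathbb S^1)$.
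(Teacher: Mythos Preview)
Your overall strategy coincides with the paper's: show that $\mathfrak{hol}^{*}_{x_0}(M)$ contains the five generators (\ref{gensys}) of $\mathsf{F}(\mathbb S^1)$ by producing them from the curvature vector field $\xi$ and its Berwald covariant derivatives, then invoke Proposition~\ref{ifcontains}. The identification $\xi\big|_{\I_{x_0}M}=\lambda\,\frac{d}{dt}$ is correct.

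The gap is in the mechanism you propose for the higher harmonics. The paper's key structural observation, which you do not use, is that constant flag curvature forces $\nabla_W R=0$; hence
\[
\nabla_k\xi \;=\; R\bigl(\nabla_k(\tfrac{\partial}{\partial x^1}\wedge\tfrac{\partial}{\partial x^2})\bigr) \;=\; (G^1_{k1}+G^2_{k2})\,\xi \;=\; 3\,\tfrac{\partial\Proj}{\partial y^k}\,\xi,
\]
so every iterated covariant derivative of $\xi$ is a \emph{scalar multiple of $\xi$ itself}, and the entire computation reduces to tracking that scalar. In particular the first covariant derivatives give exactly the first harmonics $3c\lambda\cos t\,\frac{d}{dt}$ and $3c\lambda\sin t\,\frac{d}{dt}$, not degree-two polynomials. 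Your heuristic that the term $\tfrac{\partial^2\Proj}{\partial y^k\partial y^l}y^i$ inside $G^i_{kl}$ injects second harmonics at this stage is off: it cancels in the trace $G^m_{mk}$ by Euler's identity for the $1$-homogeneous function $\Proj$. Likewise, your plan to compute $\partial\xi^i/\partial x^j$ directly would require $\partial g_{km}/\partial x^j$ at $x_0$, which the contracted relation $\tfrac{\partial\Proj}{\partial x^i}y^i=\Proj^2-\lambda\F^2$ does not supply.

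The second harmonics arise only from the \emph{second} covariant derivatives $\nabla_j\nabla_k\xi$. To evaluate the scalar coefficient one needs $\nabla_j\bigl(\tfrac{\partial\Proj}{\partial y^k}\bigr)$, and for this the paper uses the sharper identity
\[
\frac{\partial^2\Proj}{\partial x^j\partial y^k}
=\frac{\partial\Proj}{\partial y^j}\frac{\partial\Proj}{\partial y^k}
+\Proj\,\frac{\partial^2\Proj}{\partial y^j\partial y^k}
-\lambda\Bigl(\frac{\partial\F}{\partial y^j}\frac{\partial\F}{\partial y^k}
+\F\,\frac{\partial^2\F}{\partial y^j\partial y^k}\Bigr)
\]
(Lemma~8.2.1, eq.~(8.25) in \cite{ChSh}). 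Under hypotheses A) and B) this yields
\(
\nabla_j\nabla_k\xi=\bigl(2c^2\,\tfrac{y^jy^k}{\lVert y\rVert^2}+(2c^2-\lambda)\delta^{jk}\bigr)\xi,
\)
whose restrictions to the indicatrix produce $\sin 2t\,\frac{d}{dt}$ and $\cos 2t\,\frac{d}{dt}$; here $c\neq 0$ is used in an essential way. No Lie brackets are needed anywhere.
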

\begin{proof}
  Since $(M, \F)$ is a locally projectively flat Finsler manifold of
  non-zero constant curvature, we can use an $(x^1,x^2)$ local
  coordinate system centered at $x_0\in M$, corresponding to the
  canonical coordinates of the Euclidean space which is projectively
  related to $(M, \F)$. Let $(y^1,y^2)$ be the induced coordinate system
  in the tangent plane $T_xM$. In the sequel we identify the tangent
  plane $T_{x_0}M$ with $\mathbb R^2$ by using the coordinate system
  $(y^1, y^2)$. We will use the Euclidean norm $\lVert(y^1, y^2)\rVert =
  \sqrt{(y^1)^2+(y^2)^2}$ of $\mathbb R^2$ and the corresponding polar
  coordinate system $(e^r,t)$, too.
  \\[1ex]
  Let us consider the curvature vector field $\xi$ at $x_0= 0$ defined
  by
  \begin{displaymath}
    \xi\!=\!R \left( \frac{\partial}{\partial
        x_1}, \frac{\partial}{\partial x_2} \right)\Big|_{x=0} 
    = \lambda\big(\delta_2^ig_{1m}(0,y)y^m -\delta_1^ig_{2m}(0,y)y^m\big) 
    \frac{\partial}{\partial x^i} 
  \end{displaymath}
  Since $(M, \F)$ is of constant flag curvature, the horizontal Berwald
  covariant derivative $\nabla_WR$ of the tensor field $R$ vanishes.
  Therefore the covariant derivative of $\xi$ can be written in the form
  \begin{displaymath}
    \n_W\xi = R\left(\n_k\left(\frac{\partial}{\partial x^1}
        \wedge\frac{\partial}{\partial x^2}\right)\right)W^k.
  \end{displaymath}
  Since
  \begin{displaymath}
    \n_k\left(\frac{\partial}{\partial x^1} \wedge 
      \frac{\partial}{\partial x^2}\right) = 
    \left(G^1_{k1} + G^2_{k2}\right)\frac{\partial}{\partial x^1}
    \wedge\frac{\partial}{\partial x^2}
  \end{displaymath}
  we obtain $\n_W\xi = \left(G^1_{k1} + G^2_{k2}\right)W^k\xi.$ Using
  (\ref{eq:proj_flat_G_i}) we can express $G^m_{km} = 3\frac{\partial
    P}{\partial y^k} = 3c\frac{y^k}{\lVert y\rVert}$ and hence
  \begin{displaymath}
    \nabla_k \xi = 3\frac{\partial P}{\partial y^k} 
    = 3c\frac{y^k}{\lVert y\rVert}\xi, 
  \end{displaymath}
  where we use the notation $\nabla_k = \nabla_\frac{\partial}{\partial
    x^k}$.  Moreover we have
  \begin{displaymath}
    \nabla_j \left(\frac{\partial \Proj}{\partial y^{k}}\right)
    = \frac {\partial^2 \Proj}{\partial x^{j}\partial y^{k}} 
    - G_j^{m}\frac{\partial^2 \Proj}{\partial y^{m}\partial y^{k}}
    = \frac{\partial^2 \Proj}{\partial x^{j}\partial y^{k}} -
    \Proj\frac{\partial^2 \Proj}{\partial y^{k}\partial
      y^{j}},
  \end{displaymath}
  and hence 
  \[\n_j\left(\n_k\xi\right) = 
  3\left\{\frac {\partial^2 \Proj}{\partial x^{j}\partial y^{k}} - 
    \Proj\frac{\partial^2 \Proj}{\partial y^{k}\partial y^{j}} + \frac{\partial \Proj}
    {\partial y^{k}} \frac{\partial \Proj}{\partial y^{j}}\right\} \xi.\] 
  According to Lemma 8.2.1, equation (8.25) in \cite{ChSh}, p. 155, we obtain  
  \begin{displaymath}
    \frac {\partial^2 \Proj}{\partial x^j\partial y^k} 
    = \frac {\partial \Proj}{\partial y^j}\frac {\partial \Proj}{\partial y^k} + 
    \Proj\,\frac {\partial^2 \Proj}{\partial y^j\partial y^k} -
    \lambda\,\left(\frac {\partial \F}{\partial y^j}\frac {\partial
        \F}{\partial y^k} + \F\,\frac {\partial^2 \F}{\partial y^j\partial
        y^k}\right).
  \end{displaymath}
  It follows
  \begin{displaymath}
    \n_j\left(\n_k\xi\right) = \left((4c^2 - \lambda)\frac{\partial
        \F}{\partial y^{j}} \frac{\partial \F}{\partial y^{k}} + (2c^2 -
      \lambda)\F\frac{\partial^2 \F}{\partial y^{j}\partial y^{k}} \right)
    \xi.
  \end{displaymath}
  Using the conditions (A) and (B) we get
  \[\n_j\left(\n_k\xi\right) = \left(2c^2\,\frac{y^jy^k}{\lVert y\rVert^2} + (2c^2 - \lambda)\delta^{jk}\right) \xi,\]
  where $\delta^{jk}\in\{0,1\}$ such that $\delta^{jk} = 1$ if and only if $j = k$.\\
  Let us introduce polar coordinates $y^1 = r\cos t$, $y^2 = r\sin t$ in
  the tangent space $T_{x_0}M$. We can express the curvature vector
  field, its first and second covariant derivatives along the indicatrix
  curve $\{(\cos t,\sin t);\; 0\leq t<2\pi\}$ as follows:
  \begin{displaymath}
    \xi \!=\! \lambda \frac{d}{dt}, \quad \nabla_1 \xi 
    \!=\! 3c \lambda \cos t  \frac{d}{dt},\quad
    \nabla_2  \xi \!= \!-\! 3c \lambda \sin t \frac{d}{dt}, \quad
    \n_1\! \left(\n_2\xi\right) = c^2\lambda \sin 2t \frac{d}{dt},
  \end{displaymath}
  \begin{displaymath}
    \n_1\!\left(\n_1\xi\right) \!=\! \lambda \left(2c^2\cos^2 t 
      \!+\! 2c^2 \!-\! \lambda\right) \frac{d}{dt},\quad   
    \n_2\!\left(\n_2\xi\right) \!=\!   \lambda \left(2c^2\sin^2 t 
      \!+\! 2c^2 \!-\! \lambda\right) \frac{d}{dt}.
  \end{displaymath}
  Since $c\,\lambda \neq 0$, the vector fields
  \begin{displaymath}
    \frac{d}{dt}, \quad \cos t \,\frac{d}{dt}, \quad \sin t \,
    \frac{d}{dt},\quad \cos t\sin t\,\frac{d}{ dt}, 
    \quad \cos^2 t \,\frac{d}{dt},\quad \sin^2 t\,\frac{d}{ dt}
  \end{displaymath}
  are contained in the infinitesimal holonomy algebra
  $\mathfrak{hol}^{*}_{x_0}(M)$. It follows that the generator system
  \begin{displaymath}
    \left\{\frac{d}{dt}, \quad \cos t \,\frac{d}{dt}, \quad 
      \sin t \,\frac{d}{dt},\quad \cos  2t\,\frac{d}{ dt}, 
      \quad \sin2 t\,\frac{d}{ dt}\right\}
  \end{displaymath}
  of the Fourier algebra $\mathsf{F}(\mathbb S^1)$ (c.f. equation
  (\ref{gensys})) is contained in the infinitesimal holonomy algebra
  $\mathfrak{hol}^{*}_{x_0}(M)$.  Hence the assertion follows from
  Proposition \ref{ifcontains}.
\end{proof}

We remark, that the standard Funk plane and the Bryant-Shen $2$-spheres
are connected, projectively flat Finsler manifolds of nonzero constant
curvature.  Moreover, in each of them, there exists a point $x_0\in M$
and an adapted local coordinate system centered at $x_0$ with the
following properties: the Finsler norm $\mathcal F(x_0,y)$ and the
projective factor $\mathcal P({x_0},y)$ at $x_0$ are given by $\mathcal
F({x_0},y) = \lVert y\rVert$ and by $\mathcal
P({x_0},y)=c\!\cdot\!\lVert y\rVert$ with some constant $c\in\mathbb R$,
$c\neq 0$, where $\lVert y\rVert$ is an Euclidean norm in the tangent
space at $x_0$. Hence we obtain
\begin{theorem}
  The closed holonomy groups of the standard Funk plane and of the
  Bryant-Shen $2$-spheres are maximal, that is diffeomorphic to the
  orientation preserving diffeomorphism group of $\mathbb S^1$.
\end{theorem}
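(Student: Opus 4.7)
The plan is to derive the statement as a direct corollary of Theorem~\ref{prop_Hol_Dif}. For each of the two families it suffices to exhibit a point $x_0$ and an adapted chart centered there at which conditions (A) and (B) of that theorem hold; the remaining hypotheses (simple connectedness, projective flatness, nonzero constant curvature) are part of Examples~\ref{funk} and~\ref{BS}.

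For the standard Funk plane $(\mathbb{D}^2, \F)$, I would take $x_0 = 0$ in the canonical chart of Example~\ref{funk}. Substituting $x = 0$ into (\ref{projective1}) and (\ref{projective2}) kills every term involving $|x|^2$ or $\langle x, y\rangle$, leaving $\F(0, y) = |y|$ and $\Proj(0, y) = \pm\tfrac{1}{2}|y|$. Thus (A) holds with Euclidean norm $\lVert y\rVert = |y|$, and (B) holds with $c = \pm\tfrac{1}{2} \neq 0$. Since $\lambda = -\tfrac{1}{4} \neq 0$ and $\mathbb{D}^2$ is simply connected, Theorem~\ref{prop_Hol_Dif} applies.

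For the Bryant--Shen $2$-sphere $(\mathbb{S}^2, \F_\alpha)$ with $0 < |\alpha| < \pi/2$, I would take $x_0$ to be the distinguished center of the chart in Example~\ref{BS}. Formula (\ref{BrSh}) gives $\F(0, y) = |y|\cos\alpha$, which is a positive rescaling of the standard Euclidean norm and hence itself a Euclidean norm, so (A) holds with $\lVert y\rVert := |y|\cos\alpha$. With respect to this same norm, $\Proj(0, y) = |y|\sin\alpha = (\tan\alpha)\lVert y\rVert$, so (B) holds with $c = \tan\alpha \neq 0$. The curvature $\lambda = 1$ is nonzero and $\mathbb{S}^2$ is simply connected, so Theorem~\ref{prop_Hol_Dif} applies again.

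No step here is really hard --- Theorem~\ref{prop_Hol_Dif} was designed precisely to cover these examples, and the only calculations needed are the point evaluations above. The two subtleties to keep in mind are: (i) condition (A) permits \emph{any} Euclidean norm on $T_{x_0}M$, not specifically the one inherited from the chosen chart, so the scaling factor $\cos\alpha$ in the Bryant--Shen case is harmless; and (ii) the Riemannian case $\alpha = 0$ must be excluded from the Bryant--Shen family, since then $\Proj(0, y) \equiv 0$, condition (B) fails, and the holonomy group reduces to the compact $\mathsf{SO}(2)$ rather than $\mathsf{Diff}_+^{\infty}(\mathbb{S}^1)$.
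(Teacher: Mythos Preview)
Your proposal is correct and follows essentially the same approach as the paper: verify conditions (A) and (B) of Theorem~\ref{prop_Hol_Dif} at the distinguished point $x_0=0$ in each example and then invoke that theorem. Your treatment is in fact more explicit than the paper's, which simply asserts without computation that such a point with $\mathcal F(x_0,y)=\lVert y\rVert$ and $\mathcal P(x_0,y)=c\lVert y\rVert$, $c\neq 0$, exists; your observation that the case $\alpha=0$ must be excluded (the Riemannian round sphere) is a valid caveat that the paper leaves implicit via the word ``irreversible'' in the introduction.
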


\bigskip

\noindent
Zolt\'an Muzsnay
\\
Institute of Mathematics, University of Debrecen,
\\
H-4032 Debrecen, Egyetem t\'er 1, Hungary
\\
{\it E-mail}: {\tt {}muzsnay@science.unideb.hu}\vspace{4mm}
\\
P\'eter T. Nagy
\\
Institute of Applied Mathematics, \'Obuda University
\\
H-1034 Budapest, B\'ecsi \'ut 96/b, Hungary \\ {\it E-mail}: {\tt
  {}nagy.peter@nik.uni-obuda.hu}

\end{document}